\theoremstyle{plain}
\newtheorem{theorem}{Theorem}
\newtheorem{lemma}{Lemma}
\newcommand{\refT}[1]{Theorem~\ref{T:#1}}
\newcommand{\refL}[1]{Lemma~\ref{L:#1}}
\newcommand{\refS}[1]{Section~\ref{S:#1}}
\numberwithin{equation}{section}
\newcommand{\be}[1]{\begin{equation}\label{E:#1}}
\newcommand{\refE}[1]{\eqref{E:#1}}
\newcommand{\q}{\quad}
\newcommand{\lp}{\left(}
\newcommand{\rp}{\right)}
\newcommand{\lf}{\left\lfloor}
\newcommand{\rf}{\right\rfloor}
\newcommand{\lc}{\left\lceil}
\newcommand{\rc}{\right\rceil}
\newcommand{\Blp}{\Bigl(}
\newcommand{\Brp}{\Bigr)}
\newcommand{\ff}{\frac}
\newcommand{\ga}{\alpha}
\newcommand{\gb}{\beta}
\newcommand{\gd}{\delta}
\newcommand{\gre}{\epsilon}
\newcommand{\gf}{\varphi}
\newcommand{\gt}{\tau}
\newcommand{\gw}{\omega}
\newcommand{\tcp}{\Phi_t}
\newcommand{\qr}{Q_\rho}
\newcommand{\qt}{Q_\gt}
\newcommand{\ca}{\mathcal{A}}
\newcommand{\at}{\ca_\gt}
\newcommand{\rep}{\mathcal{R}_{p,q}}
\newcommand{\tr}{\{p,q,r\}}
\newcommand{\ts}{\{p,q,s\}}
\begin{document}

\title[Inclusion-Exclusion Polynomials]
{On Ternary Inclusion-Exclusion Polynomials}
\author{Gennady Bachman}
\address{University of Nevada, Las Vegas\\
 Department of Mathematical Sciences\\
 4505 Maryland Parkway\\
 Las Vegas, Nevada 89154-4020}
\email{bachman@unlv.nevada.edu}
\subjclass{11B83, 11C08}
\keywords{Cylcotomic polynomials, inclusion-exclusion polynomials}

\begin{abstract}
Taking a combinatorial point of view on cyclotomic polynomials
leads to a larger class of polynomials we shall call the
inclusion-exclusion polynomials. This gives a more appropriate
setting for certain types of questions about the coefficients
of these polynomials. After establishing some basic
properties of inclusion-exclusion polynomials we turn to a
detailed study of the structure of ternary inclusion-exclusion
polynomials. The latter subclass is exemplified by cyclotomic
polynomials $\Phi_{pqr}$, where $p<q<r$ are odd primes. Our main
result is that the set of coefficients of $\Phi_{pqr}$ is simply
a string of consecutive integers which depends only on the residue
class of $r$ modulo $pq$.
\end{abstract}
\maketitle

\section{Introduction}\label{S:1}

A ternary cyclotomic polynomial  is a cyclotomic polynomial $\tcp$
where $t$ is a product of three distinct odd primes. More precisely,
\[
\tcp(z)=\prod_{\substack{0<a<t\\(a,t)=1}}\lp z-e^{2\pi ia/t}\rp,
\]
where $t=pqr$ and $p$, $q$, and  $r$ are distinct odd primes.
Following the usual conventions we assume that $p$ is the smallest
of the three primes, let $a_m=a_m(t)$ denote the coefficients of
$\tcp$, and set
\[
A(t)=\max_m|a_m(t)|.
\]
There has been much progress recently in our understanding of
coefficients of $\tcp$ and, especially, of the function $A(t)$.
A long-standing conjecture of M. Beiter \cite{Be} asserted that
the bound
\be{1.1}
A(t)=A(pqr)\le\ff{p+1}2
\end{equation}
holds for all $t$. But in a recent work of Y. Gallot and P. Moree
\cite{GM} this conjecture was disproved in a rather dramatic
fashion and a number of prescriptions of integers $t$ for which
\refE{1.1} fails to hold were given. In particular, it was shown
that if $\gre>0$ is fixed then for every sufficiently large prime
$p$ there exist $q$ and $r$ such that
\be{1.D}
A(t)>\Blp\ff23-\gre\Brp p.
\end{equation}
They conjectured that, in fact,
\be{1.A}
A(t)\le\ff23p.
\end{equation}
Following this B. Lawrence \cite{La} announced that he proved the
validity of \refE{1.A} for $p>10^6$.

Another interesting question that was resolved recently is whether
it is possible to arrange it so that
\be{1.2}
A(t)=1,
\end{equation}
even for arbitrary large $p$. We say that polynomial $\tcp$ is flat
in this case. An old folklore conjecture asserts that there are
flat cyclotomic polynomials of all orders -- the order of $\Phi_n$
is the number of distinct odd prime divisors of $n$ if $n$ is not
a power of 2 and is 1 otherwise. The case of ternary
cyclotomic polynomials has now been settled in the affirmative and
the validity of \refE{1.2} was first established by
this author in \cite{B3}. This result was later extended by
T. Flanagan \cite{Fl} who showed that \refE{1.2} holds for a
larger family of integers $t$. But the best known result in
this direction is due to N. Kaplan \cite{Ka} who showed that
\refE{1.2} holds for every $t$ with $r\equiv\pm1\pmod{pq}$.

As part of his work on \refE{1.2} Kaplan showed that the value of
$A(t)$ is completely determined by the residue class of $r$ modulo
$pq$, where $r>q>p$. More precisely, he showed that if $s>q$ is
another prime and if $r\equiv\pm s\pmod{pq}$ then
\be{1.B}
A(pqr)=A(pqs).
\end{equation}
Moreover, he also obtained the following partial analogue of
\refE{1.B} for the set of coefficients $\ca_t=\{a_m(t)\}$ of $\tcp$.
Set identities
\be{1.C}
\ca_{pqr}=\begin{cases}
\ca_{pqs},  &\text{if $r\equiv s\pmod{pq}$,}\\
-\ca_{pqs}, &\text{if $r\equiv -s\pmod{pq}$,}
\end{cases}\end{equation}
are certainly valid if $r,s>pq$. The first of these identities
was also proved by Flanagan \cite{Fl}. Actually \refE{1.C} is only
implicit in \cite{Ka}. It follows that each residue class
$r_0$ modulo $pq$ determines at most two different sets of
coefficients $\ca_{pqr}$ with $r\equiv r_0\pmod{pq}$.

Residue class of $r$ modulo $pq$ imposes certain structure on
$\tcp$ and its set of coefficients $\ca_t$ and our object here is
to further investigate this structure. The main result of this
paper is that $\ca_t$ is completely determined by the residue class
of $r$ modulo $pq$, that is, that \refE{1.C} holds for $r,s>q$.
We are also interested in an analogue of \refE{1.C} for the case
when $r\equiv s\pmod{pq}$ but $s<q<r$. Kaplan's result on
$r\equiv\pm1\pmod{pq}$ falls into this case and is seen to be a
special case of this general principle (see \refS{3}). In pursuing
this development we shall work in a more general setting of what
we shall call inclusion-exclusion polynomials. Accordingly we begin
with a brief discussion of this class of polynomials and their
relation to cyclotomic polynomials, this is the subject of
\refS{2}. We then concentrate on the ternary case (of the
inclusion-exclusion polynomials) in \refS{3}.

\section{Inclusion-Exclusion Polynomials}\label{S:2}

Let $\rho=\{\,r_1,r_2,\dots,r_s\,\}$ be a set of natural numbers
satisfying $r_i>1$ and $(r_i,r_j)=1$ for $i\ne j$, and put
\[
n_0=\prod_ir_i,\q n_i=\ff{n_0}{r_i},\q
n_{ij}=\ff{n_0}{r_ir_j}\ [i\ne j],\q \dots
\]
For each such $\rho$ we define a function $\qr$ by
\be{2.1}
\qr(z)=\ff{\lp z^{n_0}-1\rp\cdot\prod_{i<j}
\lp z^{n_{ij}}-1\rp\cdot\dots}
{\prod_i\lp z^{n_i}-1\rp\cdot
\prod_{i<j<k}\lp z^{n_{ijk}}-1\rp\cdot\dots} .
\end{equation}
Our first observation is that $\qr$ is, in fact, a polynomial.

\begin{theorem}\label{T:1}
We have
\be{2.A}
\qr(z)=\prod_\gw(z-\gw),
\end{equation}
where the product is taken over all the roots of unity $\gw$
satisfying the condition
\[
\gw^{n_0}=1\q\text{but}\q\gw^{n_i}\ne1\q[1\le i\le s].
\]
Moreover, the degree of $\qr$ is given by
\[
\gf(\rho)=\prod_i(r_i-1).
\]
\end{theorem}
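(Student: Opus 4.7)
The plan is to work directly from the defining formula \refE{2.1}, factoring each $z^n-1$ as $\prod_{\gw^n=1}(z-\gw)$ over the complex $n$th roots of unity. This exhibits the right-hand side of \refE{2.1} as a rational function whose order at an arbitrary root of unity $\gw$ can be computed by straightforward bookkeeping, and to prove \refE{2.A} it suffices to show this order equals $1$ when $\gw^{n_0}=1$ but $\gw^{n_i}\ne1$ for all $i$, and $0$ otherwise.

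Fix $\gw$ of order $d$, and for each $T\subseteq\{1,\dots,s\}$ set $n_T=n_0/\prod_{i\in T}r_i$ (with $n_\emptyset=n_0$). Collecting signed contributions from the numerator and denominator of \refE{2.1} shows that the order of $\qr$ at $\gw$ equals
\[
e(\gw):=\sum_{T:\,d\mid n_T}(-1)^{|T|}.
\]
If $d\nmid n_0$, then each $n_T$ divides $n_0$ and so no $T$ satisfies $d\mid n_T$; thus $e(\gw)=0$, consistent with the requirement $\gw^{n_0}=1$. If instead $d\mid n_0$, the pairwise coprimality of the $r_i$ forces the factorization $d=\prod_id_i$ with $d_i:=\gcd(d,r_i)$. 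A brief check then shows that $d\mid n_T$ if and only if $d_i=1$ for every $i\in T$, equivalently $T\subseteq V(\gw):=\{i:\gw^{n_i}=1\}$. The sum $\sum_{T\subseteq V(\gw)}(-1)^{|T|}$ collapses by the binomial theorem to $(1-1)^{|V(\gw)|}$, which is $1$ when $V(\gw)=\emptyset$ and $0$ otherwise, giving exactly the claimed criterion.

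With \refE{2.A} in hand, $\qr$ is a polynomial with simple roots. To obtain the degree formula I would count these roots by inclusion-exclusion on $\{\gw:\gw^{n_0}=1\}\setminus\bigcup_i\{\gw:\gw^{n_i}=1\}$; using $\gcd_{i\in T}n_i=n_T$ this yields $\sum_T(-1)^{|T|}n_T$. Alternatively the same sum comes from reading degrees directly off \refE{2.1}. Since $n_T=\prod_{i\notin T}r_i$, expanding gives $\sum_T(-1)^{|T|}\prod_{i\notin T}r_i=\prod_i(r_i-1)$.

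The main obstacle is the combinatorial identification of $\{T:d\mid n_T\}$, and this is precisely where the hypothesis $(r_i,r_j)=1$ is used: without coprimality the decomposition $d=\prod_id_i$ fails and no such clean description is available. The remaining steps are routine manipulations of alternating subset sums.
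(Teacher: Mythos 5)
Your argument is correct and is exactly the ``routine application of the inclusion-exclusion principle'' that the paper invokes and omits: you compute the order of $\qr$ at each root of unity $\gw$ of order $d$ as the alternating sum $\sum_{T:\,d\mid n_T}(-1)^{|T|}$, identify $\{T:d\mid n_T\}$ with the subsets of $V(\gw)$ via the coprimality of the $r_i$, and collapse the sum to $(1-1)^{|V(\gw)|}$, with the degree formula following from the same alternating sum. No discrepancies with the paper's (unwritten) intended proof.
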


\begin{proof}
The claim follows by routine applications of the
inclusion-exclusion principle and we omit the straight-forward
details.
\end{proof}

We shall refer to polynomials $\qr$ as inclusion-exclusion
polynomials, the term suggested by the construction \refE{2.1}.
Our interest in this class of polynomials is motivated by the study
of coefficients of cyclotomic polynomials, as will become plain
below. From the algebraic point of view the most interesting case
is when parameters $r_i$ are assumed to be distinct prime numbers.
In this case $n_0=\prod_ir_i$ is a canonical factorization of $n_0$
into primes, $n_0$ is a square free integer, and the product in
\refE{2.A} is taken over all primitive $n_0$th roots of unity $\gw$.
In other words, in this case polynomial $\qr$ is better known as
cyclotomic polynomial $\Phi_{n_0}$. In general, as we show presently,
$\qr$ is a certain product of cyclotomic polynomials.

\begin{theorem}\label{T:2}
Given $\rho$ let
\be{2.2}
D=D_{\rho}=
\{\, d:d\mid n_0\text{ and }(d,r_i)>1\text{ for all }i\, \}.
\end{equation}
Then we have
\be{2.3}
\qr(z)=\prod_{d\in D}\Phi_d(z).
\end{equation}
\end{theorem}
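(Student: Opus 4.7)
The plan is to read off the product formula directly from Theorem~1 by grouping the roots of $\qr$ according to their order. Since every root $\gw$ of $\qr$ satisfies $\gw^{n_0}=1$, each such $\gw$ is a primitive $d$-th root of unity for a unique divisor $d$ of $n_0$. Conversely, for each $d\mid n_0$, the primitive $d$-th roots of unity are precisely the roots of $\Phi_d(z)$, and they are mutually distinct across different $d$. So the issue reduces to identifying exactly which divisors $d$ of $n_0$ contribute, i.e.\ which $d$ have the property that their primitive roots $\gw$ also satisfy $\gw^{n_i}\ne1$ for every $i$.

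To carry this out, first I would note that for a primitive $d$-th root of unity $\gw$, the equality $\gw^{n_i}=1$ is equivalent to $d\mid n_i$. So the condition "$\gw^{n_i}\ne1$ for all $i$" becomes "$d\nmid n_i$ for all $i$". I would then exploit the coprimality of the $r_i$: any divisor $d$ of $n_0=\prod_i r_i$ factors uniquely as $d=\prod_i d_i$ with $d_i\mid r_i$. Since $n_i=n_0/r_i=\prod_{j\ne i}r_j$, the divisibility $d\mid n_i$ is equivalent to $d_i=1$, which in turn is equivalent to $(d,r_i)=1$. Hence $d\nmid n_i$ if and only if $(d,r_i)>1$, and demanding this for all $i$ recovers exactly the defining condition of the set $D$ in \refE{2.2}.

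Putting the pieces together, the set of roots appearing in the factorization of \refT{1} partitions as
\[
\{\gw:\gw^{n_0}=1,\ \gw^{n_i}\ne1\text{ for all }i\}
=\bigsqcup_{d\in D}\{\gw:\gw\text{ is a primitive }d\text{th root of }1\},
\]
and each $\Phi_d(z)$ on the right-hand side of \refE{2.3} is exactly the factor contributed by the corresponding piece. Combining with \refE{2.A} yields \refE{2.3}.

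I do not anticipate any substantial obstacle; the only point requiring care is the equivalence $d\mid n_i \iff (d,r_i)=1$, which rests entirely on the coprimality hypothesis on the $r_i$. A degree check, using $\gf(\rho)=\prod_i(r_i-1)=\sum_{d\in D}\gf(d)$ via inclusion-exclusion, provides a useful consistency verification but is not needed for the argument.
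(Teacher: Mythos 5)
Your proposal is correct and follows essentially the same route as the paper: both arguments reduce \refE{2.3} to matching the root sets of the two sides via \refT{1}, with the key point being the equivalence $d\nmid n_i$ for all $i$ $\iff$ $(d,r_i)>1$ for all $i$, which you justify (slightly more explicitly than the paper) using the pairwise coprimality of the $r_i$. Your partition-by-order phrasing and the paper's two-inclusion argument are the same proof in different clothing.
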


\begin{proof}
Since both sides of \refE{2.3} are monic polynomials with roots
of multiplicity 1 it suffices to show that the roots are, in
fact, the same. So consider any root $\gw$ of $\qr$ and let $d$
be the smallest integer such that $\gw^d=1$. Then $\gw$ is a
root of $\Phi_d$. Moreover, by \refT{1}, $d\in D$ since
$d\mid n_0$ and $(d,r_i)>1$.

In the opposite direction, fix $d\in D$ and let $\gw$ be any
root of $\Phi_d$. Then $\gw^{n_0}=1$. Moreover, $(d,n_i)<d$ and
we conclude that $\gw^{n_i}\ne1$. Thus, by \refT{1}, $\gw$
is a root of $\qr$.
\end{proof}

We now turn our attention to the question of coefficients of these
polynomials. We begin with a few remarks of general nature on
coefficients of inclusion-exclusion polynomials versus coefficients
of cyclotomic polynomials. As is well known, to study coefficients
of cyclotomic polynomials it suffices to consider only polynomials
$\Phi_n$ with $n$ square free. We have seen that in this case
$\Phi_n=\qr$, where $r_i$ are prime factors of $n$. It is thus
natural to consider properties of coefficients of cyclotomic
polynomials in the larger context of coefficients of
inclusion-exclusion polynomials. In fact, the literature on this
topic contains many results on cyclotomic polynomials that are
actually theorems about inclusion-exclusion polynomials. This
characterization certainly applies to every result which was
obtained by an argument that (i) used identity \refE{2.1} as a
point of departure and (ii) did not crucially depend on parameters
$r_i$ to be prime but actually only required the condition
$(r_i,r_j)=1$, for $i\ne j$. For instance, most of the work on
coefficients of cyclotomic polynomials of low order falls into this
category. Even when it comes to open questions about coefficients of
cyclotomic polynomials, it seems rather clear that some of them are
truly questions about coefficients of inclusion-exclusion
polynomials. Take, for instance, the question of whether there
exist flat cyclotomic polynomials of arbitrary large order. It is
quite evident that this is a question about the structure of
\refE{2.1} and that the algebraic distinction of cyclotomic
polynomials has no bearing on the matter.

Finally, we remark that the setting of inclusion-exclusion
polynomials may offer not only a more appropriate context but it
may actually furnish certain ``practical'' advantages over the
setting of cyclotomic polynomials. This will become plain in the
next section where we take up the case of ternary
inclusion-exclusion polynomials (defined below).

We define the order of $\qr$ to be $s$, if $s=1$ or if $r_i\ge3$
for $1\le i\le s$, and $s-1$ otherwise. This parameter corresponds
to the order of cyclotomic polynomial and plays an important role.
The situation is identical to the more familiar setting of
cyclotomic polynomials and the best known examples of this are
polynomials of orders 1 and 2. Indeed, if $s=1$ and
$\rho=\{\, p\,\}$ ($p\ge2$; not necessarily prime), then
\be{2.B}
Q_{\{p\}}(z)=\ff{z^p-1}{z-1}=\sum_{n=0}^{p-1}z^n.
\end{equation}
Similarly,
\begin{align*}
Q_{\{p,q\}}(z)&=\ff{(z^{pq}-1)(z-1)}{(z^q-1)(z^p-1)}=
(1-z^{pq})(1-z)\sum_{i=0}^\infty z^{iq}\sum_{j=0}^\infty z^{jp}\\
&\equiv(1-z)\sum_{i,j\ge0}z^{iq+jp}\pmod{z^{(p-1)(q-1)+1}},
\end{align*}
by \refT{1}. It follows that if $\chi$ denotes the characteristic
function of integers representable in the form $iq+jp$ with
$i,j\ge0$ then
\be{2.C}
Q_{\{p,q\}}(z)=\sum_{n=0}^{(p-1)(q-1)}\chi(n)z^n-
\sum_{n=0}^{(p-1)(q-1)-1}\chi(n)z^{n+1}.
\end{equation}
Thus, in the sense of \refE{2.B} and \refE{2.C}, the structure of
$\qr$ is determined by the order. In particular, polynomials of
orders 1 and 2 are flat. (For a more detailed discussion of
polynomials of order 2, phrased in terms of cyclotomic polynomials,
see, for example, \cite{Le}.)

The condition $r_i\ge3$ in the definition of order of $\qr$ is
explained by the identity (whose cyclotomic polynomials analogue
is also well-known)
\[
Q_{\{2,r_2,\dots,r_s\}}(z)=Q_{\{r_2,\dots,r_s\}}(-z)\q[s\ge2].
\]
This follows readily from \refT{1} and we omit the details (see,
for example, \cite{Le}). This takes us to polynomials of order
$\ge3$ where the situation is considerably more interesting. As we
already mentioned in the introduction, even the ternary case, that
is $\qr$ of order 3, still presents interesting challenges. This
case is the principal object  of this paper and it will be taken
up in the next section.

The fact that cyclotomic polynomials are reciprocal proved to
be useful in the study of their coefficients. We conclude this
section by observing that the same is true for
inclusion-exclusion polynomials. Indeed, the identity
\[
\qr(z)=z^{\gf(\rho)}\qr(z^{-1})
\]
follows readily from \refE{2.1} and the fact that $\gf(\rho)$
is the degree of $\qr$. From this we infer that if
\[
\qr(z)=\sum_{m=0}^{\gf(\rho)}a_mz^m \q[a_m=a_m(\rho)],
\]
then $a_m=a_{\gf(\rho)-m}$.

\section{The Ternary Case}\label{S:3}

We shall write $\qt$ to denote a ternary inclusion-exclusion
polynomial. For esthetic reasons we normally write
$\gt=\{\, p,q,r\,\}$ rather than $\gt=\{\, r_1,r_2,r_3\,\}$. Thus,
contrary to the conventions of \refS{1}, we now assume only that
parameters $p$, $q$, and $r$ are $\ge3$ and relatively prime in
pairs. At times, however, the use of notation
$\gt=\{\, r_1,r_2,r_3\,\}$ will prove to be the better choice.
Consequently, we consider the two forms to be interchangeable and
shall freely use either one with our choice dictated by convenience.
Adopting other conventions in the introduction we write, by \refT{1},
\be{3.1}
\qt(z)=\sum_{m=0}^{\gf(\gt)}a_mz^m\q[a_m=a_m(\gt)],
\end{equation}
as well as
\be{3.A}
\at=\{\, a_m(\gt)\,\}\q\text{and}\q A(\gt)=\max_m|a_m(\gt)|.
\end{equation}
Moreover, set
\[
A^+(\gt)=\max_ma_m(\gt)\q\text{and}\q A^-(\gt)=\min_ma_m(\gt).
\]
Let us emphasize that we are not assuming any particular order for
the parameters $p$, $q$, and $r$. The structural symmetry of $\qt$
with respect to these parameters is a key aspect of the problem and
it will play an important role in our development. Correspondingly,
we shall explicitly state any additional assumptions on $p$, $q$,
and $r$ when it is appropriate.

Recall from the introduction that we are after the relationship
between polynomials $Q_{\tr}$ and $Q_{\ts}$ with
$r\equiv s\pmod{pq}$. This problem splits into two parts according
to whether
\[
r,\, s>\max(p,q)\q\text{or}\q r>\max(p,q)>s\ge1,
\]
say. The principal focus of this paper is the former condition
and our main result is as follows.

\begin{theorem}\label{T:3}
Set of coefficients $\ca_{\tr}$ is a string of consecutive integers
and, for $r>\max(p,q)$, is completely determined by the residue
class of $r$ modulo $pq$. More precisely, we have
\be{3.B}
\at=[\, A^-(\gt),A^+(\gt)\,]\cap\mathbb Z
\end{equation}
and, for $r,s>\max(p,q)$,
\be{3.C}
\ca_{\tr}=\begin{cases} \ca_{\ts}, &\text{if $r\equiv s\pmod{pq}$,}\\
 -\ca_{\ts}, &\text{if $r\equiv -s\pmod{pq}$.}
 \end{cases}\end{equation}
\end{theorem}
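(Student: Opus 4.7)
The proof has two threads, \refE{3.B} and \refE{3.C}, and my plan is to run both on top of a single coefficient-level identity. Starting from \refE{2.1} with $\rho=\{p,q,r\}$, a short manipulation gives
\[
(1-z)\qt(z)=\ff{(1-z^p)(1-z^q)(1-z^r)(1-z^{pqr})}{(1-z^{pq})(1-z^{pr})(1-z^{qr})}.
\]
Expanding each factor in the denominator as a geometric series and distributing the four binomials in the numerator exhibits the coefficient of $z^m$ on the left, which equals $a_{m-1}(\gt)-a_m(\gt)$, as the signed count
\[
\sum_{\gre,i,j,k}(-1)^{\gre_1+\gre_2+\gre_3+\gre_4}\lb\gre_1p+\gre_2q+\gre_3r+\gre_4pqr+ipq+jpr+kqr=m\rb,
\]
where $\gre\in\{0,1\}^4$, $i,j,k\in\mathbb{Z}_{\ge0}$, and $[\cdot]$ is the Iverson bracket.

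For \refE{3.B} the plan is to prove that this signed count lies in $\{-1,0,1\}$ for every $m$, yielding $|a_m-a_{m-1}|\le1$. Once this is in hand, the sequence $a_{-1},a_0,\dots,a_{\gf(\gt)+1}$ (padded with zeros outside the support) is a walk on $\mathbb{Z}$ with unit steps, so by the discrete intermediate value theorem its image is a contiguous interval of integers, which is \refE{3.B}. The $\{-1,0,1\}$ bound, which is the crux, I would prove by an explicit sign-reversing involution on the representations above: stratify by $m\bmod p$, $m\bmod q$, $m\bmod r$, and within each stratum pair off representations by toggling one of $\gre_1,\gre_2,\gre_3$ against a compensating shift in $(i,j,k)$, so that each stratum collapses to at most one canonical survivor whose sign supplies the coefficient.

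For \refE{3.C}, first suppose $r\equiv s\pmod{pq}$ and write $r=s+Npq$. Substituting for $r$ in the exponent formula, the $r$-dependent terms $\gre_3r+jpr+kqr+\gre_4pqr$ split as $(\gre_3s+jps+kqs+\gre_4pqs)+(\gre_3+jp+kq+\gre_4pq)Npq$, and absorbing the second group into $i$ defines a candidate bijection between representations for $\tr$ and for $\ts$. The hypothesis $r,s>\max(p,q)$ is what lets one match the nonnegativity constraints on both sides, giving a sign-preserving bijection and hence $a_{m-1}(\tr)-a_m(\tr)=a_{m-1}(\ts)-a_m(\ts)$ for every $m$; summing down from $a_{-1}=0$ then yields $\ca_{\tr}=\ca_{\ts}$. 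The case $r\equiv-s\pmod{pq}$ reduces to the positive case applied to a complementary parameter, combined with the reciprocity $a_m=a_{\gf(\gt)-m}$ from \refS{2}; the reciprocity is the source of the overall sign flip on $\ca_{\ts}$.

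The main obstacle is the $\{-1,0,1\}$ bound. The signed count combines sixteen sign patterns against arbitrarily many nonnegative triples $(i,j,k)$, and the near-total cancellation required is not a formal consequence of the generating function. Constructing the involution so that it simultaneously respects the three residue conditions modulo $p,q,r$ and interacts correctly with the $\gre_4pqr$ term near the top of the range is where the substance of the proof will sit; the change of variable argument for \refE{3.C}, while still requiring careful matching of the nonnegativity bounds, should then follow with considerably less trouble.
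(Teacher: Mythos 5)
Your treatment of \refE{3.B} follows the paper's route: the bound $|a_m-a_{m-1}|\le1$ (the paper's \refL{2}, deduced from \refL{1}) plus the discrete intermediate value argument. Leaving the $\{-1,0,1\}$ bound as a planned sign-reversing involution is where essentially all of the paper's work sits (a delicate case analysis of the characteristic function of integers $iqr+jrp+kpq$), but at least the reduction is sound.

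The argument for \refE{3.C} contains a genuine error. You claim a sign-preserving bijection between representations giving $a_{m-1}(\tr)-a_m(\tr)=a_{m-1}(\ts)-a_m(\ts)$ for \emph{every} $m$, and hence, summing, $a_m(\tr)=a_m(\ts)$ for every $m$. That conclusion is impossible: $Q_{\tr}$ and $Q_{\ts}$ have degrees $(p-1)(q-1)(r-1)\ne(p-1)(q-1)(s-1)$, so they cannot agree coefficient by coefficient; only their \emph{sets} of coefficients coincide. Concretely, your substitution sends the data $(\gre,i,j,k)$ to $i'=i+(\gre_3+jp+kq+\gre_4pq)N$, which is injective in one direction but not surjective (inverting it can make $i'$ negative), so the signed counts at a fixed $m$ genuinely differ. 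The paper's \refL{6} instead matches coefficients across a re-indexing, $a_{\ga r+\gb}=b_{\ga s+\gb}$ for $0\le\gb<r$, and then must separately prove the nontrivial fact that the extremal coefficients of $Q_{\ts}$ are attained at indices of this covered form; your proposal has no counterpart for that step. Finally, for $r\equiv-s\pmod{pq}$ the reciprocity $a_m=a_{\gf(\gt)-m}$ cannot be the source of the sign flip: it says the coefficient sequence is a palindrome and preserves every value. The negation in the paper comes instead from the identity $\chi_r(kr+j)=\chi_s(ks-j)$, under which corresponding terms enter the two instances of the coefficient formula \refE{3.5} with opposite signs.
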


As we pointed out in \refS{2}, much of what is known about
cyclotomic polynomials of low order pertains to corresponding
inclusion-exclusion polynomials. The work of Flanagan and Kaplan
discussed in the introduction is a case in point. In particular,
identity \refE{3.C} was already known to hold under the assumption
$r,s>pq$.

The remark in the preceding paragraph applies to all of our
references in what follows. Thus, for the sake of simplicity, we
shall henceforth ignore the distinction between cyclotomic and
inclusion-exclusion polynomials, when appropriate.

We deduce \refE{3.B} from \refL{2} below. Both of these facts were
discovered independently by Gallot and Moree \cite{GM2}. It is
worth noting that the approach in the works of \cite{Ka} and
\cite{GM2} is rather different from ours.

We derive \refT{3} by a sequence of lemmas some of which are
of independent interest and shed additional light on the
structure of $\qt$. As we remarked earlier, our development
preserves symmetry in the parameters $p$, $q$, and $r$ whenever
it is appropriate. This, in particular, will be handy when
considering the second alternative, namely
\be{3.D}
r\equiv\pm s\pmod{pq}\q\text{and}\q r>\max(p,q)>s\ge1.
\end{equation}

The situation in this case is more complicated and $A(p,q,r)$
need not equal to $A(p,q,s)$ -- in a slight abuse of notation
we shall write $A(p,q,r)$ in place of $A(\{\, p,q,r\,\})$, and
use the same conventions for the functions $A^+$ and $A^-$.
Instead we have the following result. Recall from \refS{2} that
polynomials of order less than 3 are flat, so that $A(p,q,2)=1$.
Moreover, it is convenient to extend the definition of $A$ by
setting $A(p,q,1)=0$. With these conventions we state our result
for \refE{3.D}.

\begin{theorem}\label{T:4}
If $r$ and $s$ satisfy \refE{3.D}, then
\be{3.E}
A(p,q,s)\le A(p,q,r)\le A(p,q,s)+1.
\end{equation}
\end{theorem}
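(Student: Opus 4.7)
The plan is to reduce \refT{4} to a base case via iterated application of \refT{3}, and then to quantify the residual gap through an explicit identity comparing $Q_{\tr}$ with $Q_{\ts}$. By \refT{3}, whenever $r > \max(p,q) + pq$ the triple $\{p,q,r-pq\}$ again falls under its hypotheses, giving $\ca_{\{p,q,r\}} = \ca_{\{p,q,r-pq\}}$ and hence $A(p,q,r) = A(p,q,r-pq)$. Iterating, we may assume $\max(p,q) < r \le \max(p,q) + pq$; in particular, $r = s + pq$ in the $r\equiv s\pmod{pq}$ case, and $r = pq - s$ in the $r\equiv -s\pmod{pq}$ case.

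For the base-case comparison, \refE{2.1} applied to $\{p,q,r\}$ together with \refE{2.1} applied to $\{p,q\}$ (giving $Q_{\{p,q\}}(z) = (z^{pq}-1)(z-1)/[(z^p-1)(z^q-1)]$) yields the polynomial identity
\[
Q_{\tr}(z)\,Q_{\{p,q\}}(z) = Q_{\{p,q\}}(z^r),
\]
and similarly with $s$ in place of $r$. Subtracting these and using $(z^{pq}-1)\mid(z^{jr}-z^{js})$ whenever $r\equiv s\pmod{pq}$, one obtains
\[
Q_{\tr}(z) - Q_{\ts}(z) = \frac{(z^p-1)(z^q-1)}{z-1}\cdot R(z),
\]
where, writing $r = s+kpq$ and letting $\gl_j$ be the coefficients of $Q_{\{p,q\}}(z)$,
\[
R(z) \;=\; \sum_j \gl_j\, z^{js}\bigl(1 + z^{pq} + \cdots + z^{(jk-1)pq}\bigr).
\]
The multiplier $(z^p-1)(z^q-1)/(z-1)$ has exactly $2p$ nonzero coefficients, each $\pm 1$, with disjoint supports $\{0,\ldots,p-1\}$ and $\{q,\ldots,q+p-1\}$ when $p<q$. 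The case $r\equiv -s\pmod{pq}$ is parallel, using the self-reciprocity of $\qt$ recorded at the end of \refS{2}.

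The bounds in \refE{3.E} then follow from a careful coefficient-by-coefficient analysis of this identity. The lower bound $A(p,q,s)\le A(p,q,r)$ comes from showing that every extremal coefficient of $Q_{\ts}$ persists, at an appropriately shifted position, as a coefficient of $Q_{\tr}$. The upper bound $A(p,q,r)\le A(p,q,s)+1$ requires bounding the contribution of the correction polynomial $((z^p-1)(z^q-1)/(z-1))\cdot R(z)$ coefficient-wise by at most $1$ in absolute value. The main obstacle, in my view, lies in this last step: $R(z)$ is a superposition of many shifts of the sparse $\pm 1$ coefficient pattern of $Q_{\{p,q\}}$, and the multiplier convolves each copy with a small $\pm 1$ window. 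Verifying that the resulting interference cancels to a net contribution of absolute value at most $1$ at every position requires careful tracking of supports, together with the arithmetic of $s\pmod{pq}$. This is precisely where the $+1$ slack in \refE{3.E} arises: in the regime $s>\max(p,q)$ of \refT{3}, the separation of supports yields exact cancellation, whereas for $s<\max(p,q)$ a single unit of overlap is unavoidable.
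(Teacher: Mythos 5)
First, a point of reference: the paper does not actually prove \refT{4}. It states explicitly that ``the proof of \refT{4} will require further development and will be carried out elsewhere,'' offering only remarks (the reduction $A(p,q,r)=A(p,q,pq\pm s)$ via \refT{3}, and the consequence \refE{3.F}). So there is no in-paper argument to compare yours against, and your proposal must stand on its own. Judged that way, your setup is sound as far as it goes: the reduction by \refT{3} to $r=s+pq$ (resp.\ $r=pq-s$) is legitimate, the identity $Q_{\tr}(z)\,Q_{\{p,q\}}(z)=Q_{\{p,q\}}(z^{r})$ is correct (both sides equal $(z^{pqr}-1)(z^{r}-1)/[(z^{qr}-1)(z^{rp}-1)]$), and the resulting formula for $Q_{\tr}-Q_{\ts}$ checks out algebraically.

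The genuine gap is that the argument stops exactly where the theorem begins. Both inequalities of \refE{3.E} are reduced to two unproved claims: that every extremal coefficient of $Q_{\ts}$ ``persists, at an appropriately shifted position,'' in $Q_{\tr}$, and that the correction $\ff{(z^p-1)(z^q-1)}{z-1}\cdot R(z)$ contributes at most $1$ in absolute value to every coefficient. You flag the second as ``the main obstacle,'' and it is worse than an obstacle: taken literally it is a coefficient-by-coefficient assertion far stronger than \refE{3.E}. For every $m>\deg Q_{\ts}=(p-1)(q-1)(s-1)$ the coefficient of $Q_{\ts}$ is $0$, so your claim would force $|a_m(\{p,q,r\})|\le1$ on most of the range of $Q_{\tr}$, including the middle where (as the paper notes after \refE{3.L}) the large coefficients live; this is very hard to reconcile with the sharpness of \refE{3.F} for $s=2,3$, so the upper bound cannot be obtained by a same-position comparison of this kind. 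There is also an internal mismatch: your lower bound compares coefficients at \emph{shifted} positions while your upper bound compares them at the \emph{same} position, and neither comparison is established. Nothing in the proposal controls the interference among the roughly $(p-1)(q-1)$ shifted copies of the $\pm1$ window that $R$ superposes. Note finally that the machinery the paper does develop for such comparisons (\refL{4}, \refL{5}, \refL{6}) requires $r,s>\max(p,q)$ and breaks precisely in the regime $s<\max(p,q)$ of \refE{3.D} --- which is exactly why the author defers the proof; some genuinely new input is needed, and the proposal does not supply it.
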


The proof of \refT{4} will require further development and will be
carried out elsewhere. We shall limit ourselves here to a few brief
remarks. Note that, by \refT{3}, we have
\[
A(p,q,r)=A(p,q,pq\pm s),
\]
under \refE{3.D}. In this light \refE{3.E} is seen as a recursive
estimate. Of course, using an absolute upper bound for $A(p,q,s)$
on the right of \refE{3.E} yields the corresponding upper bound for
$A(p,q,r)$. For instance, a simple and convenient estimate
\be{3.F}
A(p,q,r)=A(p,q,pq\pm s)\le s,
\end{equation}
valid for all $s\ge1$, is obtained on combining \refE{3.E} with
the bound (see \cite{B1})
\[
A(p,q,s)\le s-\lc s/4\rc.
\]
Estimate \refE{3.F} sacrifices precision for convenience and is
certainly weaker than \refE{3.E} for $s\ge5$. On the other hand,
\refE{3.F} is sharp for $1\le s\le3$. We do not know if the
equation $A(p,q,pq+4)=4$ has any solutions. Note also that Kaplan's
result on flat cyclotomic polynomials corresponds to \refE{3.F}
with $s=1$.

When iteration of \refE{3.E} is possible it leads to a very rapid
reduction technique. For example, two applications of \refE{3.E}
give
\[
A(p,pq+1,p^2q+p+q)\le A(p,pq+1,q)+1\le2.
\]

Our first step is to observe that, by \refE{2.1}, polynomial $\qt$
has a representation
\be{3.2}\begin{aligned}
\qt(z)&=\ff{(1-z^{pqr})(1-z^r)(1-z^q)(1-z^p)}
{(1-z^{qr})(1-z^{rp})(1-z^{pq})(1-z)}\\
&\equiv(1-z^r)(1-z^q)(1+z+\dots+z^{p-1}) \\
&\phantom{(1-z^r)(1-z^q)}\times \sum_{i=0}^\infty z^{iqr}
\sum_{j=0}^\infty z^{jrp}\sum_{k=0}^\infty z^{kpq}
\pmod{z^{pqr}}.
\end{aligned}\end{equation}
Evidently, of key importance are integers $n$ of the form
\be{3.3}
n=iqr+jrp+kpq,\q i,j,k\ge0,
\end{equation}
and we let $\chi=\chi_{\gt}$ be the characteristic function of
such integers, that is,
\be{3.4}
\chi(n)=\chi_{\gt}(n)=\begin{cases}
1, &\text{if $n$ has representation \refE{3.3},}\\
0, &\text{otherwise.}
\end{cases}\end{equation}
Note that if $n<pqr$ then either representation \refE{3.3} is
not possible or it is unique. Therefore, by \refE{3.1}, \refE{3.2}
and \refE{3.4}, the identity
\be{3.5}
a_m=\sum_{m-p<n\le m}\bigl(\chi(n)-\chi(n-q)-\chi(n-r)
+\chi(n-q-r)\bigr)
\end{equation}
holds for all $m<pqr$. Let us clarify the meaning of this statement.
Recall that $\qt$ is a polynomial of degree $\gf(\gt)$. But in
\refE{3.5} we take $a_m=0$ for $m<0$ and for $\gf(\gt)<m<pqr$ and
then the identity  remains valid in the range $m<pqr$. We shall
find this extension useful for technical reasons.

In considering integers representable in the form \refE{3.3} it
is helpful to observe that every integer $n$ has a unique
representation in the form
\be{3.6}
n=x_nqr+y_nrp+z_npq+\gd_npqr,
\end{equation}
with $0\le x_n<p,\ 0\le y_n<q,\ 0\le z_n<r,$ and
$\gd_n\in\mathbb{Z}$. It follows that $n$ is representable in the
form \refE{3.3} if and only if $\gd_n\ge0$. But if $n<pqr$, as we
shall assume henceforth, then $\gd_n\le0$ and we obtain the
characterization
\be{3.7}
\chi(n)=1\q\text{if and only if}\q \gd_n=0 \q[n<pqr].
\end{equation}

We shall deduce \refE{3.B} from \refL{1} below. Recall our
convention of using $\{\, p,q,r\,\}$ and
$\{\, r_1,r_2,r_3\,\}$ interchangeably.

\begin{lemma}\label{L:1}
We have
\[
\bigl|\chi(n)-\sum_i\chi(n-r_i)+\sum_{i<j}\chi(n-r_i-r_j)
-\chi(n-r_1-r_2-r_3)\bigr|\le1.
\]
\end{lemma}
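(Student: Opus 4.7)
The plan is to extend (3.7) to all integers, analyze each $\chi(n - s_T)$ using the standard representation (3.6), and reduce the lemma to a finite step-function bound.

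The same argument underlying (3.7) gives, for every $n \in \mathbb Z$, the characterization $\chi(n) = 1 \iff \delta_n \ge 0$. Writing the standard form $-s_T = X_T qr + Y_T rp + Z_T pq + \Delta_T pqr$ and combining it with (3.6) for $n$ (carrying out wrap-arounds in each coordinate) yields
\[
\delta_{n-s_T} = \delta_n + \Delta_T + [x_n + X_T \ge p] + [y_n + Y_T \ge q] + [z_n + Z_T \ge r].
\]
Denote the three bracket terms collectively by $W_T(n)$ and set $g_T := \Delta_T + W_T(n)$. The structural fact is that $X_T$ depends only on $T\cap\{q,r\}$ (since $-p \equiv 0 \pmod{p}$ means adding or removing $p$ from $T$ doesn't change the $X$-coordinate), with the analogous statements for $Y_T, Z_T$. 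Hence each of the three wrap indicators depends on $T$ through a different 2-element subset of $\{p,q,r\}$, giving a tensor-like decomposition $W_T(n) = W^x_{T\cap\{q,r\}} + W^y_{T\cap\{p,r\}} + W^z_{T\cap\{p,q\}}$.

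Now rewrite
\[
S(n) \;=\; \sum_{T}(-1)^{|T|}[\delta_n \ge -g_T]
\]
and regard it as a step function of $d := \delta_n$, with the $g_T$ fixed by $n$. It vanishes for $d \ll 0$ and $d \gg 0$, and jumps by $(-1)^{|T|}$ as $d$ crosses each of the (at most 8) thresholds $-g_T$. Thus $|S(n)|\le 1$ will follow once I show that, ordering the thresholds along the real line, the signs $(-1)^{|T|}$ interleave so that every partial sum stays in $\{-1,0,1\}$.

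Establishing this interleaving is the main technical content. The wrap indicators are determined by nine binary comparisons between $(x_n, y_n, z_n)$ and arithmetic quantities coming from the standard form of $-p$, $-q$, $-r$ and their pairwise sums, while the $\Delta_T$'s fall into a few subcases according to elementary inequalities (e.g., whether $\bar r_p + \bar q_p$ is less than, equal to, or greater than $p$, where $\bar r_p$ denotes $r^{-1} \bmod p$). The tensor-like decomposition above sharply restricts which 9-tuples of indicators are jointly realizable. Completing this finite case analysis is the main obstacle.
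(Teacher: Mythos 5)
Your setup is sound: the formula $\delta_{n-s_T}=\delta_n+\Delta_T+[x_n+X_T\ge p]+[y_n+Y_T\ge q]+[z_n+Z_T\ge r]$ is correct, the observation that $X_T$ depends only on $T\cap\{q,r\}$ (and its two analogues) is right, and the reformulation of the alternating sum as $\sum_T(-1)^{|T|}[\delta_n\ge -g_T]$, viewed as a step function in $\delta_n$ with the residue $(x_n,y_n,z_n)$ held fixed, is a legitimate reduction. But the proof stops exactly where the lemma begins. The entire content of the statement is the claim that the eight thresholds $-g_T$, ordered along the line, carry signs $(-1)^{|T|}$ whose running partial sums stay in $\{-1,0,1\}$; you assert that this ``will follow once I show'' the interleaving, describe the shape of the case analysis, and then declare it ``the main obstacle'' without carrying it out. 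Nothing in what you wrote identifies the mechanism that forces the interleaving --- a priori the four even-signed thresholds could cluster below the four odd-signed ones, giving $|S|=4$, and ruling this out requires using the joint realizability constraints on the nine wrap indicators together with the actual values of the $\Delta_T$ (which depend on comparisons among the residues $r^{-1}\bmod p$, $q^{-1}\bmod p$, etc.). So as submitted this is a reduction, not a proof.

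For comparison, the paper takes a different and more economical route: it writes the alternating triple difference as $\psi_{uv}(n')-\psi_{uv}(n'-w)$, where $\psi_{uv}$ is the two-parameter second difference \refE{3.G}, imports the known bound $|\psi_{uv}|\le1$ from \cite[Lemma 2]{B1}, and then only has to exclude the single bad pattern $\psi_{uv}(n)=1$, $\psi_{uv}(n-w)=-1$. Repeated use of the symmetry $\psi_{uv}(n)=\psi_{(-u)(-v)}(n-u-v)$ and of $|\psi_{uv}|\le1$ collapses that pattern to one explicit configuration of six $\chi$-values, which is killed by chasing coefficients in \refE{3.6}. The two-dimensional intermediate object $\psi_{uv}$ is precisely what tames the case explosion that your direct three-dimensional analysis runs into. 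If you want to salvage your approach, you should either complete the finite verification (organizing it by the three pairwise interactions your tensor decomposition isolates, which is essentially rediscovering the $\psi_{uv}$ structure) or reduce to the published two-parameter bound as the paper does.
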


\begin{proof}
Let $\gt'=\{\,\pm r_1,\pm r_2,\pm r_3\,\}$ and for every
pair of $u,v\in\gt'$ with $|u|\ne|v|$ set
\be{3.G}
\psi_{uv}(n)=\chi(n)-\chi(n-u)-\chi(n-v)+\chi(n-u-v).
\end{equation}
Observe that if $w$ is another element of $\gt'$ and $|u|$, $|v|$,
and $|w|$ are all distinct then
\[
\bigl|\chi(n)-\sum_i\chi(n-r_i)+\sum_{i<j}\chi(n-r_i-r_j)
-\chi(n-{\textstyle\sum_ir_i})\bigr|= \bigl|\psi_{uv}(n')-\psi_{uv}(n'-w)\bigr|,
\]
where $n'=n+(u-|u|)/2+(v-|v|)/2+(w-|w|)/2$. Therefore to prove the
lemma it suffices to show that the inequality
\be{3.H}
\psi_{uv}(n)-\psi_{uv}(n-w)\le1
\end{equation}
holds for all $n$ (in an appropriate range depending on parameters
$u$, $v$, and $w$).

In \cite[Lemma 2]{B1} it was shown that
\be{l11}
|\psi_{uv}(n)|\le1.
\end{equation}
Actually, this estimate was given explicitly only for $\psi_{qr}$,
with $q,r>p$, but the argument applies for every choice of
$u$ and $v$ in $\gt'$. The rest of this proof is essentially an
extension of \cite[Proof of Lemma 2]{B1} and, in particular,
\refE{l11} will serve as a convenient reduction tool. In the first
place, by \refE{3.H} and \refE{l11}, it suffices to
show that there is no $n$ for which
\be{l13}
\psi_{uv}(n)=1\q\text{and}\q\psi_{uv}(n-w)=-1.
\end{equation}
To reach a contradiction let us assume that \refE{l13} holds for
some $n$. Noting that, by \refE{3.G}, we have
\[
\psi_{uv}(n)=\psi_{(-u)(-v)}(n-u-v),
\]
shows that, in addition to $\psi_{uv}(n)=1$, there is no loss in
generality in assuming that $\chi(n)=1$. Similarly, by \refE{3.G}
and symmetry, it follows that in addition to $\psi_{uv}(n-w)=-1$
we may assume that $\chi(n-w-u)=1$. It now follows from \refE{l11}
with $v$ replaced by $w$ that we may also assume that $\chi(n-w)=1$,
say. But then, since $\psi_{uv}(n-w)=-1$, we must also have
$\chi(n-w-v)=1$ and $\chi(n-w-u-v)=0$. Finally, in view of
$\psi_{uv}(n)=1$, we may further assume that $\chi(n-u)=0$, say.
To summarize, to show that \refE{l13} is not
possible it suffices to show that there is no $n$ for which
\[
\chi(n)=\chi(n-w)=\chi(n-w-u)=\chi(n-w-v)=1
\ \text{and}\ \chi(n-u)=\chi(n-w-u-v)=0.
\]
But this readily follows from \refE{3.6} and \refE{3.7} by
chasing the coefficients in representations \refE{3.6} of all
the relevant integers (see \cite[Proof of Lemma 2]{B1}).
\end{proof}

\begin{lemma}\label{L:2}
We have $\bigl|a_m(\gt)-a_{m-1}(\gt)\bigr|\le1$.
\end{lemma}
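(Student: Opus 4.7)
My plan is to observe that the lemma is essentially an immediate consequence of \refL{1} combined with the convolution formula \refE{3.5}. The idea is that taking the consecutive difference $a_m - a_{m-1}$ causes the sum in \refE{3.5} to telescope, and what remains is exactly the inclusion-exclusion alternating sum controlled by \refL{1}.

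More precisely, set
\[
f(n) = \chi(n) - \chi(n-q) - \chi(n-r) + \chi(n-q-r),
\]
so that \refE{3.5} reads $a_m = \sum_{m-p<n\le m} f(n)$ (in the range $m<pqr$ with the boundary extensions already discussed). Subtracting the analogous expression for $a_{m-1}$, all terms with $m-p < n \le m-1$ cancel, leaving
\[
a_m - a_{m-1} = f(m) - f(m-p).
\]
Expanding $f$, this is
\[
\chi(m) - \chi(m-p) - \chi(m-q) - \chi(m-r) + \chi(m-p-q) + \chi(m-p-r) + \chi(m-q-r) - \chi(m-p-q-r),
\]
which is precisely the quantity estimated by \refL{1} with $\{r_1,r_2,r_3\}=\{p,q,r\}$ and $n=m$. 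Applying \refL{1} yields $|a_m - a_{m-1}| \le 1$ for $m<pqr$.

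I would then briefly address the boundary: for $m \ge pqr$ both $a_m$ and $a_{m-1}$ are zero (since $\varphi(\gt)=(p-1)(q-1)(r-1)<pqr$), and at $m=0$ we use $a_0=1$, $a_{-1}=0$, giving a difference of $1$. So the inequality actually holds for all $m \in \mathbb Z$. There is no real obstacle here, since all the substantive work was already done in \refL{1}; the main step is simply the telescoping observation that packages \refE{3.5} into the form \refL{1} controls.
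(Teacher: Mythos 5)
Your proof is correct and is exactly the paper's argument: the paper likewise notes that \refE{3.5} telescopes to give $a_m-a_{m-1}=\psi_{qr}(m)-\psi_{qr}(m-p)$ (your $f$ is the paper's $\psi_{qr}$ from \refE{3.G}) and then invokes \refL{1}. Your extra attention to the boundary cases is harmless but not needed beyond the convention already stated after \refE{3.5}.
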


\begin{proof}
By \refE{3.5} and \refE{3.G},
$a_m-a_{m-1}=\psi_{qr}(m)-\psi_{qr}(m-p)$,
and the claim follows from \refL{1}.
\end{proof}

The following simple observation is rather useful. By \refE{3.6} and
\refE{3.7}, we have
\begin{gather}
\chi(n)=\chi(n-pq),\q\text{unless $\gd_n=z_n=0$},\label{E:3.12}\\
\chi(n)=1\ \text{and}\  \chi(n-pq)=0,\q
 \text{if $\gd_n=z_n=0$},\label{E:3.13}
\end{gather}
as well as the analogues of \refE{3.12} and \refE{3.13} with $pq$
and $z_n$ replaced by $qr$ and $x_n$ or by $rp$ and $y_n$,
respectively.

\begin{lemma}\label{L:3}
Let $R_m$ be the set of all integers appearing as an argument of
$\chi$ in the summation \refE{3.5}. Then we have
\be{l3}
a_m=a_{m-pq},\q\text{unless there is $n\in R_m$ with $\gd_n=z_n=0$,}
\end{equation}
as well as the analogues of \refE{l3} with $pq$ and $z_n$ replaced
by $qr$ and $x_n$ or by $rp$ and $y_n$, respectively.
\end{lemma}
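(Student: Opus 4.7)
The plan is to express the difference $a_m - a_{m-pq}$ as a signed sum over $R_m$ of differences of the form $\chi(\nu) - \chi(\nu - pq)$, and then invoke the pointwise invariance \refE{3.12} to see that each such difference vanishes under the hypothesis.

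First I would apply \refE{3.5} to both $a_m$ and $a_{m-pq}$, re-indexing the latter via $n \mapsto n - pq$ so that both sums range over $m-p < n \le m$. Subtracting term by term yields
\[
a_m - a_{m-pq} = \sum_{m-p<n\le m}\Bigl[\bigl(\chi(n)-\chi(n-pq)\bigr) - \bigl(\chi(n-q)-\chi(n-q-pq)\bigr) - \bigl(\chi(n-r)-\chi(n-r-pq)\bigr) + \bigl(\chi(n-q-r)-\chi(n-q-r-pq)\bigr)\Bigr].
\]
As $n$ ranges over $m-p<n\le m$, the four arguments $n,\ n-q,\ n-r,\ n-q-r$ appearing in the left half of each bracket trace out precisely the set $R_m$.

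Next, for each $\nu \in R_m$, \refE{3.12} gives $\chi(\nu) - \chi(\nu-pq) = 0$ unless $\gd_\nu = z_\nu = 0$. Hence if no $n \in R_m$ satisfies $\gd_n = z_n = 0$, every bracketed expression above is zero and we conclude $a_m = a_{m-pq}$, which is \refE{l3}. The two analogues (with $pq,\,z_n$ replaced by $qr,\,x_n$ or by $rp,\,y_n$) follow by the same argument, using the parallel versions of \refE{3.12} noted immediately after it. The application of \refE{3.5} is legitimate throughout because the paper extends $a_m$ by zero outside $[0,\gf(\gt)]$ so that \refE{3.5} holds for every $m < pqr$, which covers both $m$ and $m-pq$ in the range of interest. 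There is no real obstacle: the lemma is essentially an immediate corollary of \refE{3.5} and the invariance \refE{3.12}.
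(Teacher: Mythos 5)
Your proof is correct and takes exactly the paper's approach: the paper's entire proof is the single sentence that the lemma is an immediate consequence of \refE{3.5} and \refE{3.12}, which is precisely the term-by-term cancellation you spell out. Nothing further is needed.
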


\begin{proof}
This is an immediate consequence of \refE{3.5} and \refE{3.12}.
\end{proof}

\begin{lemma}\label{L:3B}
The estimate
\be{3.I}
|a_m-a_{m-r_ir_j}|\le2
\end{equation}
holds unconditionally for every pair of parameters $r_i\ne r_j$.
Moreover, if $r\ge p+q$ then we have
\be{3.J}
|a_m-a_{m-pq}|\le1.
\end{equation}
\end{lemma}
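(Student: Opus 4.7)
The plan is to derive both estimates from \refE{3.5}, exploiting the fact that the roles of $p$, $q$, $r$ can be interchanged by the symmetry of $\qt$. To handle $|a_m - a_{m-pq}|$ I would use the symmetric version of \refE{3.5} in which $r$ plays the outer role:
\[
a_m = \sum_{m-r < n \le m}\psi_{pq}(n), \q \psi_{pq}(n) = \chi(n) - \chi(n-p) - \chi(n-q) + \chi(n-p-q).
\]
Setting $e(k) := \chi(k) - \chi(k-pq)$, identities \refE{3.12} and \refE{3.13} give $e(k) \in \{0,1\}$ with $e(k) = 1$ iff $\gd_k = z_k = 0$; in particular, $r \mid k$ is necessary for $e(k) = 1$.

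Subtracting the identities for $a_m$ and $a_{m-pq}$ yields
\[
a_m - a_{m-pq} = \sum_{m-r < n \le m} \bigl(e(n) - e(n-p) - e(n-q) + e(n-p-q)\bigr).
\]
Each of the four inner sums runs over a shifted interval of length $r$, and therefore contains exactly one multiple of $r$, where alone $e$ can be nonzero. Calling these four multiples $A, B, C, D$ (for the shifts $0, p, q, p+q$ respectively), we obtain $a_m - a_{m-pq} = e(A) - e(B) - e(C) + e(D) \in [-2, 2]$, which is \refE{3.I} for the pair $pq$. Choosing $p$ or $q$ as the outer parameter and repeating the argument establishes \refE{3.I} for $r_ir_j \in \{qr, pr\}$.

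For the refined estimate \refE{3.J}, I would finish with a case analysis on $\alpha := m - A \in [0, r)$. Assuming without loss of generality $p \le q$, the hypothesis $r \ge p+q$ guarantees $p, q, p+q \le r$, so each of $B, C, D$ equals either $A$ or $A - r$ according as $\alpha$ exceeds $p$, $q$, or $p+q$. Splitting $[0,r)$ into the four subranges $[0, p)$, $[p, q)$, $[q, p+q)$, $[p+q, r)$, the triple $(B, C, D)$ becomes $(A-r, A-r, A-r)$, $(A, A-r, A-r)$, $(A, A, A-r)$, $(A, A, A)$ respectively. In each subrange, $e(A) - e(B) - e(C) + e(D)$ collapses to $0$ or $\pm (e(A) - e(A-r))$, hence lies in $\{-1, 0, 1\}$.

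The main obstacle is purely bookkeeping: tracking how $B, C, D$ slide relative to $A$ as $\alpha$ varies. The hypothesis $r \ge p+q$ is used only to exclude a second multiple of $r$ from entering the window $(m - r - p - q, m - p - q]$; it is precisely this possibility that, if permitted, would allow a $\pm 2$ value to persist, and without the hypothesis one is forced back to the weaker estimate \refE{3.I}.
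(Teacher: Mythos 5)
Your proof is correct and is essentially the paper's argument: both rest on localizing the difference $\chi(n)-\chi(n-r_ir_j)$ to multiples of the complementary parameter via \refE{3.12}--\refE{3.13}, then counting those multiples in the four signed windows coming from \refE{3.5}, with the hypothesis $r\ge p+q$ forcing the contributions to collapse to a single difference $\pm(\chi(\ga r)-\chi(\ga r-r))$. The only divergence is cosmetic: you take $r$ as the outer parameter in \refE{3.5} (windows of length $r$, inner shifts $p$, $q$), while the paper keeps $p=\min(r_1,r_2,r_3)$ as the outer parameter (windows of length $p$, inner shifts $q$, $r$); the two forms are interchangeable by the symmetry of \refE{3.2}.
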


\begin{proof}
Consider representation \refE{3.5} with parameter $p$ given by
$p=\min(r_1,r_2,r_3)$. We shall now prove \refE{3.I} with
$r_ir_j=qr$, the remaining case follows in the same way. Put
$I_n=(n-p,n]\cap\mathbb Z$, so that
\be{3.K}
R_m=I_m\cup I_{m-q}\cup I_{m-r}\cup I_{m-q-r}.
\end{equation}
Note that $x_n=0$ if and only if $n$ is a multiple of $p$. But
each of the four intervals on the right of \refE{3.K} contains
exactly one multiple of $p$, say $\ga_ip$. Therefore, by
\refE{3.5}, \refE{3.12} and \refE{3.13}, we get
\[
|a_m-a_{m-qr}|\le
|\chi(\ga_1p)-\chi(\ga_2p)-\chi(\ga_3p)+\chi(\ga_4p)|,
\]
and \refE{3.I} follows.

To prove \refE{3.J} we argue in the same way but take advantage
of the condition $r\ge p+q$. By \refL{3}, we may assume that $R_m$
contains multiples of $r$ -- these are integers $n$ with $z_n=0$.
But, in the present case, the range $I_m\cup I_{m-q}$ contains
at most one such multiple, say $\ga r$. Therefore,
by \refE{3.5}, \refE{3.12} and \refE{3.13}, we have
\[
|a_m-a_{m-pq}|=|\chi(\ga r)-\chi(\ga r-r)|\le1,
\]
as claimed.
\end{proof}

Lemmas 3 and 4 have a number of interesting consequences. Observe
that if we take $r$ to be the largest of the three parameters then,
by \refE{l3}, it suffices to consider coefficients $a_m$ with
$m=\ga r+\gb$ and $0\le\gb<\min(r,p+q)$. We will use this fact
below. The parallel between \refE{3.J} and \refL{2} is immediate.
Unlike \refL{2}, however, \refE{3.J} has a hole in the form of the
range $\max(p,q)<r<p+q$. Estimate \refE{3.I} may be used to give a
simple upper bound for coefficients of $\qt$ as follows. Let
$p<\min(q,r)$ and take $r_ir_j=qr$ in \refE{3.I}. One then readily
verifies that iterating \refE{3.I} yields the bound
\be{3.L}
|a_m|\le2\lc m/(qr)\rc+1.
\end{equation}
Recall that we may assume that $m\le\gf(\gt)/2<pqr/2$. Thus
\refE{3.L} suggests that coefficients of largest size are to
occur near the middle of the range of the index $m$ and gives a
nontrivial bound for ``small'' $m$. Note also that replacing 2 by
1 on the right of \refE{3.I} would have the same effect on the
right of \refE{3.L}. But this would imply the bound
$A(\gt)\le\lc p/2\rc+1$, contradicting \refE{1.D}. It follows
that, in general, \refE{3.I} is sharp.

Another immediate consequence of \refE{3.6} is that $\gd_n\ge0$
if and only if $x_nqr+y_nrp\le n$. Therefore, if
\be{3.8}
f(n)=f_{\gt,r}(n)=x_nq+y_np,
\end{equation}
then, by \refE{3.7},
\be{3.9}
\chi(n)=1\q\text{if and only if}\q f(n)\le\lf n/r\rf.
\end{equation}
This observation, first made in \cite{B2}, plays a key role in
our analysis. Note that $f(n)\equiv nr^*\pmod{pq}$, where $r^*$ is
the multiplicative inverse of $r$ modulo $pq$. Now let $[N]_{pq}$
denote the least nonnegative residue of $N$ modulo $pq$ and let
$\rep$ be the set of integers representable as a nonnegative linear
combination of $p$ and $q$, that is,
\[
\rep=\{\, N\mid N=xq+yp,\ x,y\ge0\,\}.
\]
Then, by \refE{3.6} and \refE{3.8}, we have
\be{3.10}
f(n)=\begin{cases} [nr^*]_{pq}, &\text{if $[nr^*]_{pq}\in\rep$}, \\
 [nr^*]_{pq}+pq, &\text{otherwise.}
\end{cases}\end{equation}
It is now evident that $f$ is determined by the residue class of
$r$ modulo $pq$. Before stating this formally, let us introduce the
convention of writing $f_r$ in place of $f_{\gt,r}$, as long as it
is understood that the parameters $p$ and $q$ are fixed. Similarly,
we will find it convenient to write $\chi_r$ in place of
$\chi_{\gt}$ under the same circumstances.

\begin{lemma}\label{L:4}
If $r\equiv s\pmod{pq}$ and $n_1\equiv n_2\pmod{pq}$ then
$f_r(n_1)=f_s(n_2)$.
\end{lemma}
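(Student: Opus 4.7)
The plan is to simply unpack the formula \refE{3.10}, which already encapsulates essentially everything we need. Define $r^{*}$ and $s^{*}$ as the multiplicative inverses of $r$ and $s$ modulo $pq$; since $p,q,r$ are pairwise coprime, $r^{*}$ exists, and the hypothesis $r\equiv s\pmod{pq}$ (together with the fact that $s$ must therefore also be coprime to $pq$) makes $s^{*}$ exist and forces $r^{*}\equiv s^{*}\pmod{pq}$. Combining this with $n_1\equiv n_2\pmod{pq}$ gives $n_1 r^{*}\equiv n_2 s^{*}\pmod{pq}$, so that $[n_1 r^{*}]_{pq}=[n_2 s^{*}]_{pq}$.

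The key point is that the right-hand side of \refE{3.10} refers only to (i) the reduced residue $[n r^{*}]_{pq}$ and (ii) the set $\rep$, which depends only on the fixed parameters $p$ and $q$. Since both inputs agree for $(r,n_1)$ and $(s,n_2)$, the two cases in \refE{3.10} are triggered identically in both computations, yielding $f_r(n_1)=f_s(n_2)$.

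There is no real obstacle; the lemma is essentially a bookkeeping consequence of the explicit formula \refE{3.10} and the well-definedness of inversion modulo $pq$ on residue classes. The only mild care needed is to confirm that $s$ inherits coprimality with $pq$ from the hypothesis $r\equiv s\pmod{pq}$, so that $f_s$ is actually defined in the sense of the preceding paragraph; this is automatic since a common divisor of $s$ and $pq$ would equally divide $r$.
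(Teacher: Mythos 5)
Your proposal is correct and follows essentially the same route as the paper: the paper's one-line proof likewise observes that $[n_1r^*]_{pq}=[n_2s^*]_{pq}$ and appeals to \refE{3.10}. You have merely spelled out the intermediate steps (well-definedness of inversion on residue classes and the fact that $\rep$ depends only on $p$ and $q$) that the paper leaves implicit.
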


\begin{proof}
Since $[n_1r^*]_{pq}=[n_2s^*]_{pq}$ the conclusion follows from
\refE{3.10}.
\end{proof}

In addition to assumptions of \refL{4} we need to impose certain
further restrictions in order to guarantee that
$\chi_r(n_1)=\chi_s(n_2)$.

\begin{lemma}\label{L:5}
Suppose that $\max(p,q)<r<s$ and that $r\equiv s\pmod{pq}$. Then
\be{l51}
\chi_r(kr+j)=\chi_s(ks+j),
\end{equation}
for all $k<pq$ and $|j|<r$. Moreover, if $|j|<\min(r,pq)$ then
we also have
\be{l52}
\chi_r(kr+j-r)=\chi_s(ks+j-r).
\end{equation}
\end{lemma}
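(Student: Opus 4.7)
The plan is to combine the criterion \refE{3.9}, $\chi_r(n)=1\Leftrightarrow f_r(n)\le\lf n/r\rf$ (with $\chi_r(n)=0$ trivially for $n<0$, by \refE{3.3}--\refE{3.4}), with \refL{4}. Writing $s=r+mpq$ for some positive integer $m$, one checks in both identities that the arguments for $\chi_r$ and $\chi_s$ differ by $kmpq$, so \refL{4} yields a common value $F$ for $f_r$ and $f_s$ on these arguments. The problem thereby reduces to comparing $\lf\cdot/r\rf$ with $\lf\cdot/s\rf$.

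For \refE{l51}, with $N=kr+j$ and $N'=ks+j$, the hypothesis $|j|<r<s$ forces $\lf j/r\rf=\lf j/s\rf$ (both equal $0$ when $j\ge0$ and $-1$ when $j<0$), so the floor values agree and the two criteria become identical. The ranges $0\le N<pqr$ and $0\le N'<pqs$ follow from $k<pq$ and $|j|<r<s$, except in the subcase $k=0,\,j<0$, where both arguments are negative and both characteristic values vanish by convention.

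The second identity \refE{l52} is more delicate. Setting $N=(k-1)r+j$ and $N'=ks+j-r$, the sharper hypothesis $|j|<\min(r,pq)$ together with $s\ge r+pq$ (forced by $r<s$ and $r\equiv s\pmod{pq}$) gives $\lf N'/s\rf=k-1$ in every case, whereas $\lf N/r\rf$ equals $k-1$ when $j\ge0$ and $k-2$ when $j<0$. The case $j\ge0$ matches immediately; the hard case is $j<0$ with $k\ge2$, where the two criteria disagree only if $F=k-1$, and ruling this out is the main obstacle.

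To establish $F\ne k-1$, rewrite \refE{3.6} as $N=rF+pq(z_N+r\delta_N)$ using $F=x_Nq+y_Np$. Substituting $N=(k-1)r+j$ and $F=k-1$ yields $j=pq(z_N+r\delta_N)$, which combined with $|j|<pq$, $0\le z_N<r$, and $\delta_N\in\mathbb Z$ forces $z_N=\delta_N=0$, hence $j=0$, contradicting $j<0$. The remaining edge cases---$k=0$ (both $N$ and $N'$ negative) and $k=1$ with $j<0$ (where $N<0$ while $0<N'<s$, and the same modular argument shows $F\ne0$)---are then handled directly.
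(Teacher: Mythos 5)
Your proof is correct and follows essentially the same route as the paper: reduce both identities to the criterion \refE{3.9} via \refL{4}, compare the floor values, and in the delicate case $j<0$ of \refE{l52} rule out $f=k-1$ by a mod-$pq$ argument (the paper phrases this as $f_r(kr+j-r)\equiv k-1+jr^*\pmod{pq}$ with $jr^*\not\equiv0$, which is the same computation you carry out directly from \refE{3.6}). Your explicit treatment of the negative-argument edge cases is a harmless elaboration of what the paper's uniform criterion already covers.
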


\begin{proof}
The first claim follows from \refE{3.9} and \refL{4}. The second
claim with $j\ge0$ follows in exactly the same way (\refE{l51}
contains \refE{l52} for $j>0$). In the remaining case we have
\be{l53}
\Big\lfloor\ff{kr+j-r}r\Big\rfloor=k-2\q\text{and}\q
\Big\lfloor\ff{ks+j-r}s\Big\rfloor=k-1.
\end{equation}
Moreover, if $j\not\equiv0\pmod{pq}$ then
\be{l54}
f_r(kr+j-r)\ne k-1,
\end{equation}
since $f_r(kr+j-r)\equiv k-1+jr^*\pmod{pq}$. Combining
\refE{l53} and \refE{l54} with \refE{3.9} and \refL{4}
completes the proof of the lemma.
\end{proof}

We are now ready to consider functions $A^+(\gt)$ and $A^-(\gt)$.
In view of \refE{3.B}, these functions capture all the information
about the coefficients of $\qt$ as a set. It is plain from our
introductory discussion of the function $A(\gt)$ (in the form of
$A(t)$) that there are basic gaps in our understanding of these
functions. It is interesting to note that in contrast to this the
quantity $A^+(\gt)-A^-(\gt)$ is more transparent. Indeed,
it is known \cite{B1, B2} that the bound
\[
A^+(\gt)-A^-(\gt)\le p
\]
is valid for all $\gt$ and that it is sharp. Our present aim is
the identity \refE{3.C} for which we need to show that if $p$ and
$q$ are fixed then for $r>\max(p,q)$ functions $A^{\pm}(p,q,r)$
depend only on the residue class of $r$ modulo $pq$.

\begin{lemma}\label{L:6}
If $r,s>\max(p,q)$ and $r\equiv s\pmod{pq}$ then
\[
A^+(p,q,r)=A^+(p,q,s)\q\text{and}\q A^-(p,q,r)=A^-(p,q,s).
\]
\end{lemma}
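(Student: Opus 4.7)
The strategy is to establish the stronger pointwise identity
\[
a_{\alpha r+\beta}(\{p,q,r\})=a_{\alpha s+\beta}(\{p,q,s\})
\]
for all integers $\alpha\ge0$ and all $\beta$ with $0\le\beta<\min(r,p+q)$, assuming without loss of generality that $r<s$. The remark following \refL{3B}, based on iterated application of \refL{3}, brings every coefficient of $Q_{\{p,q,r\}}$ into this form, and the analogous reduction holds on the $s$-side. Thus the pointwise identity yields $\ca_{\tr}\subseteq\ca_{\ts}$; the reverse inclusion (and hence the conclusion $A^{\pm}(p,q,r)=A^{\pm}(p,q,s)$) follows from a symmetric argument assisted by reciprocity.

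I would prove the identity by applying \refE{3.5} on both sides and matching, for each $i\in\{0,\dots,p-1\}$, the four $\chi$-contributions. Writing $m=\alpha r+\beta$ and $m'=\alpha s+\beta$, the terms $\chi_r(m-i)=\chi_r(\alpha r+(\beta-i))$ and $\chi_r(m-i-r)=\chi_r((\alpha-1)r+(\beta-i))$ pair with $\chi_s(m'-i)$ and $\chi_s(m'-i-s)$ by invoking \refE{l51} with $(k,j)=(\alpha,\beta-i)$ and $(\alpha-1,\beta-i)$; the hypothesis $|j|<r$ is immediate since $|\beta-i|\le\max(p-1,\beta)<\min(r,p+q)\le r$. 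The shift-$q$ terms $\chi_r(m-i-q)$ and $\chi_r(m-i-q-r)$ likewise match their $s$-counterparts via \refE{l51} with $j=\beta-i-q$ whenever $|\beta-i-q|<r$, which is automatic when $r\ge p+q$ and holds throughout the bulk of the reduced range otherwise.

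The main obstacle lies in the exceptional subregime of Case~2 ($\max(p,q)<r<p+q$) in which $\beta-i-q\le -r$, necessarily with $\beta<p+q-r$. Here I would rewrite
\[
\alpha r+(\beta-i-q)=(\alpha-1)r+(r+\beta-i-q)
\]
and appeal to \refE{l52} with $(k,j)=(\alpha,r+\beta-i-q)$, which produces $\chi_r(m-i-q)=\chi_s(\alpha s+(\beta-i-q))=\chi_s(m'-i-q)$ precisely. The delicate step is verifying the hypothesis $|j|<\min(r,pq)$ of \refE{l52}: in this subregime $|r+\beta-i-q|\le p+q-r-1-\beta<\min(p,q)<r$, using $r>\max(p,q)$ and $r<p+q$, while $\min(p,q)<pq$ for $p,q\ge3$. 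The term $\chi_r(m-i-q-r)$ is handled identically with $k$ reduced by one, and summing over $i$ establishes the pointwise identity.

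A secondary subtlety concerns the reverse set inclusion $\ca_{\ts}\subseteq\ca_{\tr}$: on the $s$-side, the \refL{3} reduction admits $\beta\in[r,p+q)$, which is absent on the $r$-side in Case~2. To recover these values within $\ca_{\tr}$ one combines the main identity with the reciprocity relation $a_m=a_{\gf(\gt)-m}$ and the alternate reductions $a_m=a_{m-qr}$, $a_m=a_{m-rp}$ provided by \refL{3}, relocating each such $s$-side reduced form back into the common range $\beta<r$ before invoking the pointwise identity.
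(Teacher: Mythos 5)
Your first half is correct and is essentially the paper's own argument: the pointwise identity $a_{\ga r+\gb}=b_{\ga s+\gb}$ for $0\le\gb<r$ is the paper's \refE{l62}, proved the same way by pairing the $\chi$-terms of \refE{3.5} through \refL{5}, and your treatment of the subcase $\gb-i-q\le-r$ via \refE{l52} with $j=r+\gb-i-q$ is exactly the intended use of that clause (you actually supply more detail there than the paper does). The gap is entirely in the reverse direction, i.e.\ in showing $A^{+}(s)\le A^{+}(r)$ and $A^{-}(s)\ge A^{-}(r)$. For this you must exhibit, for the extremal values of $b$, an attaining index $l=\ga s+\gb$ with $\gb<r$, and none of the tools you name can do the relocation. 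The reductions $b_l=b_{l-qs}$ and $b_l=b_{l-sp}$ shift $l$ by multiples of $s$, so they leave $\gb=l\bmod s$ unchanged and cannot move an index out of the band $r\le\gb<p+q$. The one reduction that does change $\gb$, namely $b_l=b_{l-pq}$, is exactly the one that \refL{3} withholds at these indices: the reduction to $\gb<p+q$ terminates precisely because $R_l$ contains a multiple of $s$ at which $\chi_s=1$. And reciprocity sends $\gb$ to $\bigl(\gf(\ts)-l\bigr)\bmod s\equiv-(p-1)(q-1)-\gb\pmod{s}$, over which you have no control; worse, it can map the forbidden band into itself. For instance, with $p=3$, $q=5$, $r=7$, $s=22$ the band is $\{\gb=7\}$, while $\gf(\ts)=168\equiv14\pmod{22}$, so $l\equiv7$ forces $\gf(\ts)-l\equiv7\pmod{22}$: reciprocity never leaves the band. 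So the final step of your plan, as stated, does not go through.

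What the paper does instead is not to relocate arbitrary coefficients but only the extremal one, using extremality as leverage. Taking $s=r+pq$ and $l_0$ the \emph{smallest} index with $b_{l_0}=A^{+}(s)$, one gets $l_0=\ga_0s+\gb_0$ with $\gb_0<p+q$ and $\ga_0s,\ \ga_0s-s\in R_{l_0}$. If $\gb_0\ge r$, the identity $b_{l_0-pq}=b_{l_0}-\chi_s(\ga_0s-s)+\chi_s(\ga_0s)$ together with the minimality of $l_0$ (which forces $b_{l_0-pq}<b_{l_0}$) yields $\chi_s(\ga_0s)=0$; then $b_{l_0}=b_{l_0+pq}-\chi_s(\ga_0s+s)\le b_{l_0+pq}$, and maximality gives $b_{l_0+pq}=b_{l_0}$ with $l_0+pq=(\ga_0+1)s+(\gb_0-r)$ and $0\le\gb_0-r<r$, after which your pointwise identity applies. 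Some argument of this kind (or another substitute for the inequality $A^{+}(s)\le A^{+}(r)$) is a genuinely missing ingredient in your proposal; as written it establishes only $A^{+}(r)\le A^{+}(s)$ and $A^{-}(r)\ge A^{-}(s)$.
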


\begin{proof}
Consider the function $A^+$. With $p$ and $q$ fixed, let us write
$A^+(r)$ for $A^+(\gt)$. It suffices to show that
if $r>\max(p,q)$ and $s=r+pq$ then
\be{l61}
A^+(r)=A^+(s).
\end{equation}

Throughout this argument we adopt the convention that $a_m$ and
$b_l$ denote coefficients of $Q_{\tr}$ and $Q_{\ts}$,
respectively. We show first that if $0\le\gb<r$ then
\be{l62}
a_{\ga r+\gb}=b_{\ga s+\gb}.
\end{equation}
Set $m=\ga r+\gb$ and $l=\ga s+\gb$. Observe that, by \refL{5},
we have
\be{l63}
\chi_r(m-j)=\chi_s(l-j),
\end{equation}
for all $0\le j<p+q$. Moreover, \refE{l63} also holds with $m$
and $l$ replaced by $m-r$ and $l-s$, respectively. Combining
this with \refE{3.5} establishes \refE{l62}.

Of course, the inequality $A^+(r)\le A^+(s)$ is an immediate
consequence of \refE{l62}. In fact, \refE{l62} implies \refE{l61}
for we will show that for some $\ga$ and $0\le\gb<r$ we have
\be{l64}
A^+(s)=b_{\ga s+\gb}.
\end{equation}
Thus it only remains to prove \refE{l64}.

Let $l_0$ be the smallest index for which $b_{l_0}=A^+(s)$.
Note that, by \refE{3.5}, $b_0=1$ and $b_q=-1$, so $l_0$ (as
well as the corresponding quantity for the function $A^-$) are
well defined -- see remarks following \refE{3.5}. Applying
\refE{l3} to $b_{l_0}$ shows that we must have
$l_0=\ga_0s+\gb_0$, with $0\le\gb_0<q+p$, and that $\ga_0s$
and $\ga_0s-s$ are in $R_{l_0}$ (given by \refE{3.K} with $r$
replaced by $s$). If, in fact, $\gb_0<r$ then we are done. So
suppose that $r\le\gb_0<q+p$. We claim that in this case \refE{l64}
holds with $\ga s+\gb=l_0+pq$. In the first place, we have
\[
l_0+pq=(\ga_0+1)s+\gb_0-r,\q 0\le\gb_0-r<r.
\]
Therefore, to prove \refE{l64} it remains to show that
\be{l65}
b_{l_0+pq}=b_{l_0}.
\end{equation}

Recall that $\ga_0s, \ga_0s-s\in R_{l_0}$ and observe that they
are the only multiples of $s$ in $R_{l_0}$. Moreover our
assumption on $\gb_0$ implies that
\begin{gather}
\ga_0s\in I_{l_0-q},\q\ga_0s-s\in I_{l_0-q-s},  \label{E:3.M} \\
\ga_0s+r\in I_{l_0},\q\text{and}\q\ga_0s-s+r\in I_{l_0-s}. \label{E:3.N}
\end{gather}
Therefore, by \refE{3.5}, \refE{3.12}, \refE{3.13}, and \refE{3.M},
we get
\be{l66}
b_{l_0-pq}=b_{l_0}-\chi_s(\ga_0s-s)+\chi_s(\ga_0s).
\end{equation}
Whence $\chi_s(\ga_0s)=0$ (and $\chi_s(\ga_0s-s)=1$, but we
will not need this). Also, by \refE{3.N}, $\ga_0s+s\in I_{l_0+pq}$
and $\ga_0s\in I_{l_0+pq-s}$, and they are the only multiples of
$s$ in $R_{l_0+pq}$. Therefore, reasoning as in \refE{l66},
we now get
\[\begin{aligned}
b_{l_0}&=b_{l_0+pq}+\chi_s(\ga_0s)-\chi_s(\ga_0s+s) \\
&=b_{l_0+pq}-\chi_s(\ga_0s+s).
\end{aligned}\]
This implies \refE{l65} and completes the proof in the case
of the function $A^+$.

Essentially identical argument works for the function $A^-$
and we omit the details.
\end{proof}

\begin{proof}[Proof of \refT{3}]
\refE{3.B} follows from \refL{2}.

The first conclusion in \refE{3.C} follows from \refL{6} and
\refE{3.B}.

Recall that Kaplan \cite{Ka} has proved \refE{3.C} for $r,s>pq$.
Using this we deduce the second conclusion in \refE{3.C} from
the first.

Since our method is different from that of \cite{Ka} it is of
interest to give a self-contained treatment for the case
$r\equiv-s\pmod{pq}$. We thus conclude this paper with a sketch of
our argument. We will show that if $a_m$ is an arbitrary coefficient
of $Q_{\tr}$ then there is a coefficient $b_l$ of $Q_{\ts}$
such that $b_l=-a_m$. To do this we make an additional assumption
that $r,s\ge p+q$; this is permissible in view of what we already
proved. By \refE{l3}, we may take $m=\ga r+\gb_1$, with
$0\le\gb_1<p+q$. Following Kaplan, we claim that
\[
a_{\ga r+\gb_1}=-b_{\ga s+\gb_2},\q\text{with}\ \gb_2=p+q+1-\gb_1.
\]
To this end we observe that $[(kr+j)r^*]_{pq}=[(ks-j)s^*]_{pq}$,
so that
\[
f_r(kr+j)=f_s(ks-j).
\]
This is the present case equivalent of \refL{4}. From this we
deduce that if $|j|<p+q$ then
\be{3.O}
\chi_r(kr+j)=\chi_s(ks-j),
\end{equation}
the equivalent of \refE{l51} in \refL{5}. Now apply \refE{3.O} to
the representations of $a_{\ga r+\gb_1}$ and $b_{\ga s+\gb_2}$
given by \refE{3.5}. The proof is completed on observing that the
left and the right sides of \refE{3.O} contribute with the opposite
signs to the values of $a_{\ga r+\gb_1}$ and $b_{\ga s+\gb_2}$,
respectively.
\end{proof}


\begin{thebibliography}{99}
\bibitem{B1} G. Bachman,
On the coefficients of ternary cyclotomic polynomials,
J. Number Theory 100 (2003), 104--116.
\bibitem{B2} \bysame,
Ternary cyclotomic polynomials with an optimally large set of coefficients,
Proc. Amer. Math. Soc. 132 (2004), 1943--1950.
\bibitem{B3} \bysame,
Flat cyclotomic polynomials of order three,
Bull. London Math. Soc. 38 (2006), 53--60.
\bibitem{Be} M. Beiter,
Magnitude of the coefficients of the cyclotomic polynomial
$\Phi_{pqr}$, II,
Duke Math. J. 38 (1971), 591--594.
\bibitem{Fl} T. Flanagan,
On the coefficients of ternary cyclotomic polynomials,
MS Thesis, University of Nevada Las Vegas, 2006.
\bibitem{GM} Y. Gallot and P. Moree,
Ternary cyclotomic polynomials having a large coefficient,
J. Reine Angew. Math. 632 (2009), 105--125.
\bibitem{GM2} \bysame,
Neighboring ternary cyclotomic coefficients differ by at most one,
J. Ramanujan Math. Soc. 24 (2009), no. 3, 235--248.
\bibitem{Ka} N. Kaplan,
Flat cyclotomic polynomials of order three,
J. Number Theory 127 (2007), 118--126.
\bibitem{La} B. Lawrence,
Bounding the coefficients of $\Phi_{pqr}(x)$.
Joint Mathematics Meeting of AMS/MAA (2009), 1046-11-1150.
\bibitem{Le} H. W. Lenstra,
Vanishing sums of roots of unity,
Proceedings, Bicentennial Congress Wiskundig Genootschap II,
Math. Centre Tracts 101 (Math. Centrum, Amsterdam, 1979),
pp. 249--268.

\end{thebibliography}
\end{document}